\theoremstyle{break} \newtheorem{theorem}{Theorem}[section]
\theoremstyle{break} \newtheorem{corollary}[theorem]{Corollary}
\theoremstyle{break} \newtheorem{lemma}[theorem]{Lemma}
\theoremstyle{break} \newtheorem{proposition}[theorem]{Proposition}
\theoremstyle{plain} {\theorembodyfont{\rmfamily}\newtheorem{definition}[theorem]{Definition}}
\theoremstyle{plain} {\theorembodyfont{\rmfamily} }
\theoremstyle{plain} {\theorembodyfont{\rmfamily} }
\newcommand{\Nat}{{\mathbb N}}
\newcommand{\num}[1]{\underline{#1}}
\newcommand{\rat}[2]{\underline{#1}\cdot\underline{#2}^{-1}}
\begin{document}
\title[The initial meadows]{The initial meadows}
\author[Inge Bethke and Piet Rodenburg]{Inge Bethke
and Piet Rodenburg\\
University of Amsterdam, 
Faculty of Science, \\
Section Theoretical
Software Engineering (former Programming Research Group)} 
\maketitle
\noindent {\bf Abstract}: A \emph{meadow} is a commutative ring with an inverse operator satisfying $0^{-1}=0$. We determine
the initial algebra of the meadows of characteristic 0 and show that its word problem is decidable.\\

\mbox{}\\
\mbox{}\\
\noindent {\bf Keywords}: data structures, specification languages, initial algebra semantics, word problem, decidability.

\section{Introduction}
A \emph{field} is a fundamental algebraic structure with total operations of addition, subtraction
and multiplication. Division, as the inverse of multiplication, is subjected to the restriction that
every element has a multiplicative inverse|except 0. In a field, the rules 
hold
which are familiar from the arithmetic of ordinary numbers.
That is, fields  can be specified by the axioms for 
commutative rings with identity element
($\mathit{CR}$, see Table \ref{CRaxioms}), and the negative conditional formula 
\[
x\neq 0 \rightarrow x\cdot x^{-1}=1.
\]
The prototypical example is the field of rational numbers.
\begin{table}\label{CRaxioms}
\[
\begin{array}{lrcl}
\hline
&(x+y)+z &=&x+(y+z)\\
&x+y&=&y+x\\
&x +0 &=& x\\
&x+(-x)&=&0\\
&(x \cdot y)\cdot z&=& x\cdot (y \cdot z)\\
&x\cdot y &=&y \cdot x\\
&x \cdot 1 &=& x\\
&x\cdot (y + z) &=& x\cdot y + x \cdot z\\
\hline
\end{array}
\]
\caption{Specification $CR$ of commutative rings with multiplicative identity}
\end{table}

In Bergstra and Tucker (2007) the name \emph{meadow} was proposed for commutative 
rings with a multiplicative identity element and a total operation  
$\ ^{-1}$|inversion|governed by \emph{reflection} and the \emph{restricted inverse law}.
\begin{table}
\[
\begin{array}{lrcl}
\hline
(\mathit{Ref}) &(x^{-1})^{-1} &=& x\\
(\mathit{Ril}) & x \cdot (x \cdot x^{-1})& = &x\\
\hline
\end{array}
\]
\caption{Reflection and restricted inverses law\label{Refrilaxioms}}
\end{table}
We write $\mathit{Md}$ for the set of axioms in Table \ref{CRaxioms} augmented by the additional equations in Table \ref{Refrilaxioms}.
In fact, Bergstra and Tucker (2007) requires in addition that $(-x)^{-1}=-x^{-1}$ and $(x\cdot y)^{-1}=
x^{-1}\cdot y^{-1}$. Those equations have been shown derivable from $\mathit{Md}$.

From the axioms in $\mathit{Md}$ the following identities are derivable (cf. Bergstra et al. (2007), 
Bergstra et al. (2008)).
\[
\begin{array}{rcl}
0^{-1} &=& 0\\
(-x)^{-1}&=&-(x^{-1})\\
(x\cdot y)^{-1}&=&x^{-1}\cdot y^{-1}\\
x\cdot 0 &=&0\\
x\cdot -y&=&-(x\cdot y)\\
-(-x)&=&x\\
\end{array}
\]
One can also e.g.\ show that a meadow has no nonzero nilpotent elements: Suppose $x\cdot x = 0$. Then
\[
x = x \cdot (x \cdot x^{-1})= (x \cdot x) \cdot x^{-1} = 0 \cdot x^{-1} =  x^{-1}\cdot 0 = 0.
\]
Fields are meadows if we complete the inversion operation by 
$0^{-1} = 0$. The result is called a \emph{zero-totalized field}. 

When abstract data types are specified algebraically, the \emph{initial algebra} is often taken as 
the meaning of the specification. The initial algebra always exists, is unique up to isomorphism, and 
can be constructed from the closed term algebra by dividing out over provable equality. 
Some references to universal algebra and initial algebra semantics are e.g.\
Goguen et al. (1977), Gr\"atzer (1977), McKenzie et al. (1987) and Wechler (1992).

The initial meadows of finite characteristic $k>0$ have been described already: in Bergstra et al. (2007) it is proved
that $k$ must be squarefree and that the initial meadow  of charateristic $k$ has 
$p_1\cdots p_n$ elements, where $p_1, \ldots , p_n$ are the distinct prime factors of $k$. It then follows from
Corollary 2.9 in
Bethke and Rodenburg (2007) that the initial meadow is isomorphic with
$\mathbb{G}_{p_1}\times \cdots \times \mathbb{G}_{p_n}$ where $\mathbb{G}_{p_i}$
is the prime field of order $p_i$.

In this paper we represent the initial meadow of characteristic $0$ as the minimal subalgebra of the direct product of all finite prime fields and 
show that its word problem is decidable.
Theorem \ref{initialmeadow} stems from a suggestion made by Yoram Hirshfeld, Tel Aviv University, in a private communication.
The decidability result is a rigorous elaboration of a remark made in Bergstra and Tucker (2007)|in the proof of 
Corollary 5.11|and can be read between the lines in their Section 5.

\section{The initial meadow of characteristic 0}
In this section we shall show that the initial meadow is a proper subdirect product of all prime fields.
\begin{definition}
\begin{enumerate}[(i)]
\item A \emph{subdirect embedding} of a meadow $M$ in a family
$(M_j)_{j\in J}$ of meadows is a family $(\phi_j:M \twoheadrightarrow M_j)_{j\in J}$ of surjective homomorphisms
such that for any distinct  $x,y\in M$ there exists $j\in J$ such that $\phi_j(x)\neq \phi_j(y)$. 
\item We say $M$ 
is a \emph{subdirect product} of $(M_j)_{j\in J}$ if $M\subseteq \Pi_{j\in J}M_j$ and the restricted 
projections $M\rightarrow M_j$ form a subdirect embedding of $M$.
\item A meadow $M$ is called \emph{subdirectly irreducible} when every subdirect embedding of $M$ 
contains an isomorphism.
\end{enumerate}
\end{definition}
Loosely speaking, this means that a meadow is subdirectly irreducible when it cannot be represented as
a subdirect product of ``smaller" meadows, i.e.\ proper epimorphic images. An instance of Birkhoff's Subdirect
Decomposition Theorem (see Birkhoff (1944) and Birkhoff (1991)) states
\begin{enumerate}[(i)]
\item \emph{Every meadow is isomorphic with a subdirect product of subdirectly irreducible meadows.}
\end{enumerate}
If we forget the multiplicative identity element and the inversion operation in a given 
meadow, what remains is a commutative ring satisfying
\[
\begin{array}{rrcl}
\hline
\exists x \forall y & x\cdot y& = &y ,\\
\forall x \exists y &\ x\cdot x\cdot y& =& x ,\\
\hline
\end{array}
\]
a commutative \emph{regular ring} in the sense of Von Neumann (see Goodearl (1979)). It is not hard to 
see that the $x$ in the first formula is unique, and it is shown in Bergstra et al. (2007) and Bergstra et al. (2008) that 
for any $x$, there is a unique $y$ such that both $x\cdot x\cdot y = x$ and 
$y\cdot y\cdot x = y$. So a commutative regular ring determines a unique meadow, and vice versa. 
Since $x^{-1} = x^{-1}\cdot x^{-1} \cdot x$, the 
ideals of a meadow are closed under inversion, so that in meadows, as in 
rings, ideals correspond completely to congruence relations. 
As a consequence, the lattice of congruence relations of the ring reduct of a meadow
coincides with the lattice of congruence relations of the meadow. 
We may therefore restate Lemma 2 of Birkhoff (1944)
as follows:
\begin{enumerate}[(i)]\addtocounter{enumi}{1}
\item \emph{A subdirectly irreducible meadow is a zero-totalized field.}
\end{enumerate}
Combining (1) and (2), we have that the initial meadow lies subdirectly embedded in a product 
of subdirectly irreducible zero-totalized fields. We may assume that every factor occurs only 
once|we still have a representation if we remove doubles. All factors are 
minimal, since they are homomorphic images of a minimal algebra. The 
minimal zero-totalized fields are the prime fields $\mathbb{G}_p$, $p$ a prime number, and 
$\mathbb{Q}$, the rational numbers.
\begin{lemma}
Let $A$ be the minimal subalgebra of the direct product $\mathbb{G}:= \prod_{p\text{ prime}} \mathbb{G}_p$. 
Let $Z_p$ be the element of $\mathbb{G}$ that is $0$ in all coordinates except $p$, where it is $1$. Then
\begin{enumerate}[(i)]
\item $Z_p \in A$,
\item the direct sum $\sum_{p\text{ prime}}\mathbb{G}_p$ lies embedded as an ideal in $A$, and
\item if we identify $\sum_{p\text{ prime}}\mathbb{G}_p$ with its image in $A$, 
$A/\sum_{p\text{ prime}}\mathbb{G}_p\cong \mathbb{Q}$.
\end{enumerate}
\end{lemma}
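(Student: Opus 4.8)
Throughout, for a closed meadow term $t$ write $t^{\mathbb{G}}$, $v_q(t)$ and $v_{\mathbb{Q}}(t)$ for the value of $t$ in $\mathbb{G}$, in the factor $\mathbb{G}_q$, and in $\mathbb{Q}$; thus $A=\{t^{\mathbb{G}}:t\text{ a closed meadow term}\}$. For part~(i) the plan is to exhibit $Z_p$ as such a value, namely $Z_p=1-\num{p}\cdot\num{p}^{-1}$ evaluated in $\mathbb{G}$. Indeed $\num{p}$ has value $p\bmod q$ in the coordinate $\mathbb{G}_q$, which is $0$ when $q=p$ and a unit when $q\neq p$; since each $\mathbb{G}_q$ is a zero-totalized field, $\num{p}\cdot\num{p}^{-1}$ equals $0$ in coordinate $p$ and $1$ in every other coordinate, so $1-\num{p}\cdot\num{p}^{-1}$ is exactly $Z_p$. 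As $A$ contains $0$ and $1$ and is closed under the meadow operations, $Z_p\in A$.

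For part~(ii), observe that $\num{k}\cdot Z_p$ is the element of $\mathbb{G}$ equal to $k\bmod p$ in coordinate $p$ and $0$ elsewhere, so letting $k$ range over $\Nat$ these elements form a copy of $\mathbb{G}_p$ inside $A$; taking finite sums over distinct primes then shows that $\sum_{p\text{ prime}}\mathbb{G}_p$, identified with the set of finitely supported elements of $\mathbb{G}$, is contained in $A$. That this set is an ideal of $A$ is immediate: it is closed under addition and negation, and multiplying a finitely supported element by an arbitrary element of $\mathbb{G}$ (a fortiori of $A$) can only shrink the support. Closure under inversion is then automatic by the remark in the text.

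For part~(iii) the crux is an evaluation lemma proved by induction on the structure of a closed meadow term $t$: for all but finitely many primes $q$ one has $v_q(t)=v_{\mathbb{Q}}(t)\bmod q$, where $r\bmod q$ for $r=m/n$ in lowest terms denotes $\num{m}\cdot\num{n}^{-1}$ computed in $\mathbb{G}_q$ (defined whenever $q\nmid n$). Concretely one shows the exceptional set may be taken to be the finite set $S(t)$ of primes dividing the denominator of $v_{\mathbb{Q}}(s)$ for some subterm $s$ of $t$. The cases $t=0$, $t=1$, $t=t_1+t_2$, $t=-t_1$, $t=t_1\cdot t_2$ are routine because reduction modulo $q$ is a ring homomorphism on the localization $\mathbb{Z}_{(q)}$, to which all the relevant values belong. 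The delicate case is $t=t_1^{-1}$: one splits on whether $v_{\mathbb{Q}}(t_1)=0$; if so, $q$ does not divide the denominator of $v_{\mathbb{Q}}(t_1)$ so $v_q(t_1)=0$ as well and both inverses are $0$; if not, $q$ divides neither numerator nor denominator of $v_{\mathbb{Q}}(t_1)$ so $v_q(t_1)\neq 0$ and the two inverses agree. Keeping the zero/nonzero pattern of subterms under control so that the totalized inverse cannot spoil the agreement, while keeping the exceptional set finite, is the main obstacle of the whole argument.

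Granting the lemma, define $\psi:A\to\mathbb{Q}$ by $\psi(t^{\mathbb{G}})=v_{\mathbb{Q}}(t)$. This is well defined: if $t^{\mathbb{G}}=(t')^{\mathbb{G}}$, then $v_{\mathbb{Q}}(t)$ and $v_{\mathbb{Q}}(t')$ reduce to the same element of $\mathbb{G}_q$ for infinitely many $q$, and a nonzero rational reduces to $0$ modulo only finitely many primes, so $v_{\mathbb{Q}}(t)=v_{\mathbb{Q}}(t')$. By construction $\psi$ is a surjective meadow homomorphism (every rational is $v_{\mathbb{Q}}(t)$ for a suitable $t$), and $\psi(a)=0$ holds exactly when $a$ is finitely supported: if $v_{\mathbb{Q}}(t)=0$ then $a_q=0$ for every $q\notin S(t)$, and conversely if $a=t^{\mathbb{G}}$ is finitely supported then $v_{\mathbb{Q}}(t)\bmod q=0$ for infinitely many $q$, forcing $v_{\mathbb{Q}}(t)=0$. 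Thus $\ker\psi=\sum_{p\text{ prime}}\mathbb{G}_p$ — which by part~(ii) indeed lies in $A$ — and the first isomorphism theorem gives $A/\sum_{p\text{ prime}}\mathbb{G}_p\cong\mathbb{Q}$.
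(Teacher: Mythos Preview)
Your proofs of (i) and (ii) match the paper's. For (iii), however, you take a genuinely different route. The paper argues abstractly: the quotient $A/\sum_p\mathbb{G}_p$ is a minimal meadow in which $1-\underline{n}\cdot\underline{n}^{-1}$ lies in the ideal for every positive $n$ (since it has finite support), so the quotient satisfies $\underline{n}\cdot\underline{n}^{-1}=1$ and has characteristic~$0$; then Theorem~3.1 of Bergstra and Tucker (2007) provides a surjection from $\mathbb{Q}$ onto the quotient, which must be injective because $\mathbb{Q}$ has no proper ideals. You instead build the homomorphism in the other direction, from $A$ to $\mathbb{Q}$, via an explicit evaluation lemma showing $v_q(t)\equiv v_{\mathbb{Q}}(t)\pmod q$ for all $q$ outside a finite set $S(t)$ depending computably on $t$, and then identify the kernel with the finitely supported elements. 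Your argument is correct and entirely self-contained, at the cost of carrying out the structural induction on terms (the inverse case being, as you note, the one requiring care). The paper's argument is shorter but leans on an external citation. It is worth remarking that your evaluation lemma and the bound $S(t)$ anticipate exactly the machinery the paper develops independently in Section~3 (the functions $\phi$, $\phi_n$, and $\psi$) to prove decidability of the closed word problem; in that sense your approach to (iii) is closer in spirit to the rest of the paper than the paper's own proof of (iii) is.
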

\begin{proof} 
(1) $Z_p$ is the denotation of the ground term $1- \underline{p}\cdot \underline{p}^{-1}$, where 
$\underline{p}$
stands for the ground term $1 + \cdots + 1$, with $p$ occurrences of $1$.\\
(2) Modulo isomorphism,  $\sum_{p\text{ prime}}\mathbb{G}_p$ is the ideal of $A$ generated by the 
$Z_p$'s. 
If we multiply an element of this ideal with any element of $\mathbb{G}$, the result is almost everywhere zero, and therefore belongs 
to the ideal.\\
(3) $A/\sum_{p\text{ prime}}\mathbb{G}_p$ is a minimal meadow of characteristic 0 that satisfies the 
equations $\underline{n}\cdot \underline{n}^{-1}=1$, for all positive integers $n$. So by Theorem 3.1 of 
Bergstra and Tucker (2007), 
$A/\sum_{p\text{ prime}}\mathbb{G}_p$ is a homomorphic image of $\mathbb{Q}$; since $\mathbb{Q}$ has no 
proper ideals, the homomorphism must be injective.	
\end{proof}
\begin{theorem}\label{initialmeadow}
The minimal subalgebra of $\prod_{p\text{ prime}} \mathbb{G}_p$ is an initial object in the category of meadows.
\end{theorem}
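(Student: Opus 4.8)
The plan is to produce a (necessarily unique) isomorphism between the initial meadow and the algebra $A$ of the Lemma. Write $I$ for the initial meadow, realized as the closed term algebra modulo $\mathit{Md}$-provability, and let $\psi\colon I\to A$ be the unique homomorphism. Since $A$ is a minimal algebra, $\psi$ is onto: its image is a subalgebra of $\mathbb{G}$, hence contains the smallest subalgebra of $\mathbb{G}$, which is $A$ itself. So everything reduces to showing that $\psi$ is injective; and since in meadows ideals correspond exactly to congruences, this amounts to the assertion that whenever a closed term $t$ has value $0$ in $\mathbb{G}_p$ for every prime $p$, already $\mathit{Md}\vdash t=0$.

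I would derive this from the subdirect representation assembled just before the Lemma. By Birkhoff's theorem together with facts (1) and (2), $I$ is subdirectly embedded --- through the family of unique homomorphisms out of $I$ --- in a product $\prod_{k\in K}F_k$ each factor of which is a minimal zero-totalized field, i.e.\ either $\mathbb{Q}$ or one of the $\mathbb{G}_p$. Let $t$ be closed with value $0$ in every $\mathbb{G}_p$. Whenever $F_k$ is some $\mathbb{G}_p$, the corresponding map sends $[t]$ to the value of $t$ in $\mathbb{G}_p$, which is $0$, so such factors do not separate $[t]$ from $[0]$. Hence it suffices to prove the claim that a closed meadow term with value $0$ in every $\mathbb{G}_p$ also has value $0$ in $\mathbb{Q}$: granting this, $[t]$ and $[0]$ agree under every factor map (including any $\mathbb{Q}$-factor), so $[t]=[0]$ by the separation property of a subdirect embedding, $\psi$ is injective, and $A\cong I$ is an initial meadow.

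The claim is the heart of the matter, and the step I expect to require the most care. I would prove it via a quantitative statement, established by induction on the structure of a closed term $s$: there is a finite set $E_s$ of primes such that for every $p\notin E_s$ the value of $s$ in $\mathbb{G}_p$ is the reduction modulo $p$ of the rational value $q_s$ of $s$ --- where $E_s$ is taken to contain every prime dividing the denominator of $q_s$, so that the reduction is well defined. The cases $s=1$, $s=s_1+s_2$, $s=-s_1$, $s=s_1\cdot s_2$ are routine, since reduction modulo $p$ is a ring homomorphism on the relevant localization of $\mathbb{Z}$ and finite exceptional sets combine by union. For $s=s_1^{-1}$ one splits on whether $q_{s_1}=0$: if $q_{s_1}\ne 0$, one also throws the primes dividing the numerator of $q_{s_1}$ into $E_s$, so that the value of $s_1$ in $\mathbb{G}_p$ is a unit and genuine inversion commutes with reduction; if $q_{s_1}=0$, then by induction $s_1$ has value $0$ in $\mathbb{G}_p$ for $p\notin E_{s_1}$, hence so does $s_1^{-1}$ because $0^{-1}=0$, matching the rational value $0$. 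The claim follows at once: if $t$ has value $0$ in every $\mathbb{G}_p$ and $q_t=a/b$ in lowest terms, then $p\mid a$ for all but finitely many primes $p$, which forces $a=0$, i.e.\ $t$ has value $0$ in $\mathbb{Q}$.
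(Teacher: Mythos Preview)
Your proof is correct, but it follows a genuinely different route from the paper's.

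The paper never proves directly that the canonical map $\psi\colon I\to A$ is injective. Instead it argues: the initial meadow is the minimal subalgebra of $\prod\mathcal{G}$ for some set $\mathcal{G}$ of minimal zero-totalized fields; every $\mathbb{G}_p$ must occur in $\mathcal{G}$ (otherwise $\underline{p}\cdot\underline{p}^{-1}=1$ would hold in $\prod\mathcal{G}$, blocking any homomorphism to $\mathbb{G}_p$); and then the only question is whether $\mathbb{Q}$ is also a factor. If it is, part~(3) of the Lemma supplies a surjection $h\colon A\to\mathbb{Q}$, and the embedding $(1,h)\colon A\hookrightarrow A\times\mathbb{Q}\subseteq\mathbb{G}\times\mathbb{Q}$ identifies $A$ with the minimal subalgebra of $\prod\mathcal{G}$. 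So the paper's argument leans entirely on Lemma~2.1(3), which in turn invokes Theorem~3.1 of Bergstra and Tucker~(2007).

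You replace that external citation by a self-contained inductive argument: for each closed term $s$ there is a finite exceptional set $E_s$ of primes outside of which the value of $s$ in $\mathbb{G}_p$ is the mod-$p$ reduction of its rational value. This is exactly the quantitative fact developed later in Section~3 (the functions $\phi$, $\phi_n$, $\psi$ and Proposition~3.10), so you are in effect anticipating the machinery of the decidability proof and using it to establish initiality directly. What your approach buys is independence from the Bergstra--Tucker result and a more elementary, syntactic argument; what the paper's approach buys is brevity and a cleaner separation of concerns, since the structural Lemma~2.1(3) does all the work and the term-level analysis is deferred to Section~3.
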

\begin{proof} 
From the observations above, it appears that the initial meadow is the 
minimal subalgebra of the direct product of a set $\mathcal{G}$ of zero-totalized minimal 
fields. It is easily seen that every prime field $\mathbb{G}_p$ must be in $\mathcal{G}$, otherwise 
there is no nontrivial homomorphism from a subalgebra of $\prod \mathcal{G}$ into $\mathbb{G}_p$. So if 
$\mathbb{Q}\not\in \mathcal{G}$, the initial meadow is the algebra $A$ of the previous lemma.
On the other hand, if $\mathbb{Q}\in \mathcal{G}$, by (3) of the lemma we have a surjective homomorphism 
$h: A \rightarrow \mathbb{Q}$. Then $(1, h): A \rightarrow A\times \mathbb{Q}$ shows that 
$A$ must be isomorphic to the minimal 
subalgebra of $\prod \mathcal{G}$.
\end{proof}
The initial meadow is countable, whereas the product of all finite prime fields is uncountable. This cardinality consideration shows that the initial algebra
is properly contained in the product, and is|in contrast to the finite initial meadows|not a product of fields.

\section{Decidability of the closed word problem}
The main result of this section is a rigorous description of normal forms for closed meadow terms.  To be precise, we shall prove that every
closed meadow term $t$ is provably equal to a term of the form
\[
\Sigma_{i=0}^{\psi(t)-1}Z_i\cdot \phi_{i}(t) + G_{\psi(t)}\cdot \phi(t)
\]
 where $\phi_i$ interprets $t$
in the Galois field with order $p_i$ (the $i$-th prime), $\phi$ is its interpretation in the rational numbers, $Z_i$ and $G_i$ select significant models, and 
$\psi(t)$ is an effective upper bound. This is Proposition \ref{normal}. From this it follows immediately , that the closed word problem for meadows is decidable.

We denote by $\Nat$ the set of natural numbers; $Ter_{\mathit{Md}}$ denotes the set of closed meadow terms.

\begin{definition}
\begin{enumerate}[(i)]
\item We define the set of \emph{numerals} $\Nat_{\mathit{Md}}\subseteq Ter_{\mathit{Md}}$ by
\[
\mathbb{N}_{\mathit{Md}}=\{ \underline{n}\mid n\in \Nat\}
\]
where for $n\in \Nat$, $\underline{n}$ is defined inductively as follows:
\begin{enumerate}[(i)]
\item $\underline{0}=0$,
\item $\underline{n+1}=\underline{n}+1$.
\end{enumerate}
\item We define the set of \emph{normal rational terms} $\mathbb{Q}_{\mathit{Md}}\subseteq Ter_{\mathit{Md}}$ by
\[
\mathbb{Q}_{\mathit{Md}}=\{0\}\cup \{\underline{n}\cdot \underline{m}^{-1},-(\underline{n}\cdot \underline{m}^{-1}) \mid n,m\in \Nat\ \&\ n,m>0\ 
\&\  gcd(n,m)=1\}
\]
\end{enumerate}
For $t\in \mathbb{Q}_{\mathit{Md}} $, we denote by $|t|$ the corresponding irreducible fraction in $\mathbb{Q}$.
\end{definition}
Observe that $\underline{\ }$ respects addition, multiplication and subtraction, i.e., $\mathit{Md}\vdash \underline{n}+ \underline{m}=\underline{n+m}$,
$\mathit{Md}\vdash \underline{n}\cdot \underline{m}=\underline{nm}$ and if $m<n$, then $\mathit{Md}\vdash \underline{n}- \underline{m}=\underline{n-m}$.

We now assign to every closed term a normal rational term.
\begin{definition}
We define $\phi: Ter_{\mathit{Md}} \rightarrow \mathbb{Q}_{\mathit{Md}}$ inductively as follows.
\begin{enumerate}[(i)]
\item $\phi(0)=0$, $\phi(1)=\underline{1}\cdot \underline{1}^{-1},$
\item \[
\phi(-t)=
\begin{cases}
0 & \text{if $\phi(t)=0$},\\
-(\num{n}\cdot \num{m}^{-1}) & \text{if $\phi(t)=\num{n}\cdot \num{m}^{-1}$},\\
\num{n}\cdot \num{m}^{-1} & \text{if $\phi(t)=-(\num{n}\cdot \num{m}^{-1})$.}
\end{cases}
\]
\item \[
\phi(t^{-1})=
\begin{cases}
0 & \text{if $\phi(t)=0$},\\
\num{n}\cdot \num{m}^{-1} & \text{if $\phi(t)=\num{m}\cdot \num{n}^{-1}$},\\
-(\rat{n}{m}) & \text{if $\phi(t)=-(\num{m}\cdot \num{n}^{-1})$.}
\end{cases}
\]
\item \[
\phi(t + t')=
\begin{cases}
0 & \text{if $|\phi(t)| + |\phi(t')|=0$},\\
\rat{n}{m} & \text{if $0<|\phi(t)| + |\phi(t')|=\frac{n}{m}$, $n,m>0$ and $gcd(n,m)=1$},\\
-(\rat{n}{m}) & \text{if $0>|\phi(t)| + |\phi(t')|=-\frac{n}{m}$, $n,m>0$ and $gcd(n,m)=1$.}\\
\end{cases}
\]
\item \[
\phi(t \cdot t')=
\begin{cases}
0 & \text{if $|\phi(t)|  |\phi(t')|=0$},\\
\rat{n}{m} & \text{if $0<|\phi(t)| |\phi(t')|=\frac{n}{m}$, $n,m>0$ and $gcd(n,m)=1$},\\
-(\rat{n}{m}) & \text{if $0>|\phi(t)|  |\phi(t')|=-\frac{n}{m}$, $n,m>0$ and $gcd(n,m)=1$.}\\
\end{cases}
\]
\end{enumerate}
\end{definition}
Observe that $\phi$ assigns to provably equal terms syntactically identical normal rational terms. 
\begin{proposition}\label{sounda}
For $s,t \in Ter_{\mathit{Md}}$,
\[
\mathit{Md}\vdash s=t \Rightarrow \phi(s)=\phi(t).
\]
\end{proposition}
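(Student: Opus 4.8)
The plan is to use the fact, noted in the introduction, that the zero-totalized field $\mathbb{Q}$ of rational numbers is itself a meadow, so that it is \emph{sound} to evaluate provably equal closed terms in it. First I would fix the unique homomorphism $\nu : Ter_{\mathit{Md}} \to \mathbb{Q}$ from the (initial) algebra of closed meadow terms into $\mathbb{Q}$, and record the elementary observation that $t \mapsto |t|$ is a \emph{bijection} from $\mathbb{Q}_{\mathit{Md}}$ onto $\mathbb{Q}$: the term $0$ is the only normal rational term of value $0$, every positive rational equals $n/m$ for a unique coprime pair of positive integers $n,m$ and so corresponds to exactly one term $\num{n}\cdot\num{m}^{-1}$, and symmetrically for the negative rationals and the terms $-(\num{n}\cdot\num{m}^{-1})$.

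The heart of the argument is the claim that $|\phi(t)| = \nu(t)$ for every $t \in Ter_{\mathit{Md}}$, which I would prove by induction on the structure of $t$, clause by clause through the definition of $\phi$. The base cases $t\equiv 0$ and $t\equiv 1$ are immediate. For the inductive cases I would use that $\nu$, being a homomorphism, commutes with $-$, $^{-1}$, $+$ and $\cdot$, that inversion in $\mathbb{Q}$ satisfies $0^{-1}=0$, and that $|\cdot|$ transports the rational operations back to $\mathbb{Q}_{\mathit{Md}}$ in the obvious way; then one just checks that each clause of $\phi$ computes on the $|\cdot|$-level precisely the corresponding operation of rational arithmetic. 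The sign case splits on $|\phi(t)|\pm|\phi(t')|$ (respectively $|\phi(t)|\,|\phi(t')|$) and the $\gcd$-reductions in the definition of $\phi$ are exactly what is needed for this to come out right.

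Granting the claim, the proposition follows quickly. If $\mathit{Md}\vdash s=t$, then since $\mathbb{Q}\models\mathit{Md}$ soundness of equational logic gives $\nu(s)=\nu(t)$, hence $|\phi(s)|=|\phi(t)|$ by the claim, and since $|\cdot|$ is injective on $\mathbb{Q}_{\mathit{Md}}$ this forces $\phi(s)$ and $\phi(t)$ to be literally the same term, as desired.

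I expect the only real labour to be the structural induction: it is wholly routine but must be carried out with care, in particular not overlooking the degenerate subcases where $\phi(t)=0$ (e.g.\ in the clause for $t^{-1}$, where zero-totalized inversion is the point), and checking that the bookkeeping of signs and of $\gcd$'s in the definition of $\phi$ really agrees with honest fraction arithmetic. An alternative strategy --- induction on the length of an equational derivation of $s=t$ --- seems less convenient, since $\phi$ is defined only on closed terms while the axioms of $\mathit{Md}$ contain variables, so the substitution and congruence steps would not be directly available.
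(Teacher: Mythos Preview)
Your proposal is correct and follows precisely the paper's approach: the paper simply asserts that $|\phi(s)|$ is the interpretation of $s$ in $\mathbb{Q}$ and then uses soundness plus injectivity of $|\cdot|$ to conclude. Your structural induction establishing $|\phi(t)|=\nu(t)$ merely makes explicit what the paper covers with the word ``Clearly''.
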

\begin{proof} 
Clearly, $|\phi(s)|$ is the interpretation of any closed term $s$ in $\mathbb{Q}$. Thus, if
$s=t$ is derivable, then $|\phi(s)|=|\phi(t)|$, and hence $\phi(s)=\phi(t)$. 
\end{proof}

We can also evaluate closed meadow terms in a finite prime field $\mathbb{G}$. We may think of such 
a field as the ring with the elements $0, 1, 2, \ldots , p-1$, where arithmetic is performed modulo $p$. 
We let $(p_n)_{n\in \Nat}$ be an enumeration of the primes in increasing order, starting with $p_0=2$, and 
denote by $\mathbb{G}_n$ the prime field of order $p_n$.
\begin{definition}
\begin{enumerate}[(i)]
\item For $n\in \Nat$, define $\mathbb{G}_{n,\mathit{Md}} \subseteq \Nat_{\mathit{Md}}$ by
\[
\mathbb{G}_{n,\mathit{Md}}=\{\underline{i}\mid i< p_n\}.
\]
\item For $n\in \Nat$, define the evaluation $\phi_n:Ter_{\mathit{Md}} \rightarrow \mathbb{G}_{n,\mathit{Md}}$ inductively by
\begin{enumerate}[(i)]
\item $\phi_n(0)=0$, $\phi_n(1)=\underline{1}$,
\item $\phi_n(-t)=\underline{-|\phi_n(t)|\ mod \ p_n}$,
\item \[\phi_n(t^{-1})=
\begin{cases}
0 & \text{ if $|\phi_n(t)| = 0 \ mod \ p_n$}\\
\underline{l} & \text{otherwise, where $0<l<p_n$ and $l|\phi_n(t)|=1 \ mod\ p_n$},
\end{cases}
\]
\item for $\diamond \in \{+,\cdot\}$, $\phi_n(t\diamond t')=\underline{(|\phi_n(t)|\diamond |\phi_n(t')|)\ mod \ p_n}$.
\end{enumerate}
Here we denote by $|\phi_n(t)|$ the corresponding natural number.
\end{enumerate}
\end{definition}
\begin{proposition}\label{soundb}
For $s,t \in Ter_{\mathit{Md}}$ and $n\in \Nat$,
\[
\mathit{Md}\vdash s=t \Rightarrow \phi_n(s)=\phi_n(t).
\]
\end{proposition}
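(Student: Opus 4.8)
The plan is to follow the three-step pattern of the proof of Proposition~\ref{sounda}. Recall first that a field becomes a meadow once inversion is completed by $0^{-1}=0$ (a zero-totalized field); in particular $\mathbb{G}_n$, realised as $\{0,1,\dots,p_n-1\}$ with arithmetic performed modulo $p_n$ and with $0^{-1}=0$, is a zero-totalized field and hence a model of $\mathit{Md}$.

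Next I would show, by induction on the structure of a closed term $t$, that $|\phi_n(t)|$ is the value of $t$ in $\mathbb{G}_n$. The base cases $t=0,1$ hold by definition. For the induction step one matches each defining clause of $\phi_n$ against the corresponding operation of $\mathbb{G}_n$: negation is $-|\phi_n(t)|$ reduced modulo $p_n$; for $\diamond\in\{+,\cdot\}$, the value of $t\diamond t'$ is $|\phi_n(t)|\diamond|\phi_n(t')|$ reduced modulo $p_n$; and $t^{-1}$ is sent to $0$ when $|\phi_n(t)|\equiv 0\pmod{p_n}$ and otherwise to the unique $l$ with $0<l<p_n$ and $l\,|\phi_n(t)|\equiv 1\pmod{p_n}$. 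These are precisely the operations of the zero-totalized prime field $\mathbb{G}_n$.

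With that in hand, suppose $\mathit{Md}\vdash s=t$. Since $\mathbb{G}_n\models\mathit{Md}$, soundness of equational logic gives that $s$ and $t$ have the same value in $\mathbb{G}_n$, i.e.\ $|\phi_n(s)|=|\phi_n(t)|$. But $\phi_n$ takes values in $\mathbb{G}_{n,\mathit{Md}}=\{\underline{i}\mid i<p_n\}$, and for $i\neq j$ below $p_n$ the numerals $\underline{i}$ and $\underline{j}$ are syntactically distinct; hence $\phi_n(s)=\phi_n(t)$ as terms.

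The only step that needs attention — the main, and rather mild, obstacle — is the inductive identification of $|\phi_n(t)|$ with the value of $t$ in $\mathbb{G}_n$, and within it the inversion clause: one must check that the ``unique $l$ with $0<l<p_n$ and $l\,|\phi_n(t)|\equiv 1\pmod{p_n}$'' is indeed the multiplicative inverse of $|\phi_n(t)|$ in $\mathbb{G}_n$, which amounts to the familiar fact that every nonzero residue modulo a prime is invertible. Everything else is routine.
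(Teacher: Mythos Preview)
Your proposal is correct and follows exactly the approach the paper intends: the paper's proof is just ``Similar to Proposition~\ref{sounda},'' and you have spelled out precisely that analogy --- identify $|\phi_n(t)|$ with the interpretation of $t$ in the zero-totalized prime field $\mathbb{G}_n$, invoke soundness, and use injectivity of $i\mapsto\underline{i}$ on $\{0,\dots,p_n-1\}$. Your level of detail exceeds the paper's, but the underlying argument is the same.
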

\begin{proof}
Similar to Proposition \ref{sounda}. 
\end{proof}

We now define terms $Z_n$ which equal 0 in any Galois field $\mathbb{G}_m$ with $m\neq n$, and equal 
1 in $\mathbb{G}_n$.
\begin{definition}
For $n\in \Nat$, define $Z_n= 1 - \underline{p_n}\cdot \underline{p_n}^{-1}$.
\end{definition} 
\begin{lemma}\label{Z}
For all $n,m\in \Nat$,
\begin{enumerate}[(i)]
\item $\mathit{Md}\vdash Z_n\cdot Z_n=Z_n$,\label{Z1}
\item $\mathit{Md}\vdash Z_n^{-1}=Z_n$, and  \label{Z2}
\item if $n\neq m$, then $\mathit{Md}\vdash Z_n\cdot Z_m = 0$. \label{Z3}
\end{enumerate}
\end{lemma}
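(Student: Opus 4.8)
The plan is to reduce all three items to two elementary consequences of the restricted inverse law, and then to dispatch (i) and (ii) by bare meadow arithmetic and (iii) by a Bézout identity between numerals. Throughout I would use the identities derived from $\mathit{Md}$ listed after Table \ref{Refrilaxioms} (in particular $x\cdot 0=0$ and $x\cdot(-y)=-(x\cdot y)$) and the remark that $\underline{\ }$ respects addition, multiplication and bounded subtraction. Write $e_n=\underline{p_n}\cdot\underline{p_n}^{-1}$, so that $Z_n=1-e_n$. Two facts first: from $(\mathit{Ril})$ applied to $\underline{p_n}$ we get $\underline{p_n}\cdot e_n=\underline{p_n}$, whence
\[
\underline{p_n}\cdot Z_n=\underline{p_n}\cdot 1-\underline{p_n}\cdot e_n=\underline{p_n}-\underline{p_n}=0;
\]
and reassociating $(\mathit{Ril})$ gives $\underline{p_n}\cdot\underline{p_n}\cdot\underline{p_n}^{-1}=\underline{p_n}$, so multiplying by $\underline{p_n}^{-1}$ yields $e_n\cdot e_n=e_n$, i.e.\ $e_n$ is idempotent.

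For (i), I would simply expand in the ring reduct: $Z_n\cdot Z_n=(1-e_n)(1-e_n)=1-e_n-e_n+e_n\cdot e_n=1-e_n=Z_n$, using nothing beyond $CR$ and idempotency of $e_n$. For (ii), by (i) it suffices to prove the general fact that every idempotent $y$ in a meadow satisfies $y^{-1}=y$; then take $y=Z_n$. From $y\cdot y=y$ and $(\mathit{Ril})$ one gets $y\cdot y^{-1}=y\cdot y\cdot y^{-1}=y$; applying $(\mathit{Ril})$ to $y^{-1}$ and using reflection $(\mathit{Ref})$ gives $y^{-1}\cdot y^{-1}\cdot y=y^{-1}$; substituting $y^{-1}\cdot y=y\cdot y^{-1}=y$ on the left collapses this to $y^{-1}=y$.

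For (iii), assume $n\ne m$, so $p_n$ and $p_m$ are distinct primes and $\gcd(p_n,p_m)=1$. By Bézout there are naturals $a,b$ with $a\,p_n=b\,p_m+1$, after possibly interchanging $n$ and $m$ (which the symmetric statement allows). Since $\underline{\ }$ respects products, sums and bounded subtraction, and $a\,p_n>b\,p_m$, $\mathit{Md}$ proves
\[
\underline{a}\cdot\underline{p_n}-\underline{b}\cdot\underline{p_m}=\underline{a\,p_n}-\underline{b\,p_m}=\underline{a\,p_n-b\,p_m}=\underline{1}=1 .
\]
Multiplying $Z_n\cdot Z_m$ by this representation of $1$ and distributing over the difference gives
\[
Z_n\cdot Z_m=Z_n\cdot Z_m\cdot\bigl(\underline{a}\cdot\underline{p_n}-\underline{b}\cdot\underline{p_m}\bigr)=\underline{a}\cdot(\underline{p_n}\cdot Z_n)\cdot Z_m-\underline{b}\cdot Z_n\cdot(\underline{p_m}\cdot Z_m),
\]
and both summands vanish by the preliminary facts $\underline{p_n}\cdot Z_n=0$, $\underline{p_m}\cdot Z_m=0$ together with $x\cdot 0=0$. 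Hence $\mathit{Md}\vdash Z_n\cdot Z_m=0$.

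The only place needing real care is (iii): checking that the bounded-subtraction clause genuinely applies (it does, because $a\,p_n=b\,p_m+1>b\,p_m$) and that $Z_n\cdot Z_m$ is legitimately distributed over $\underline{a}\cdot\underline{p_n}-\underline{b}\cdot\underline{p_m}$ in the ring reduct; the rest is mechanical. (A semantic alternative is available via Theorem \ref{initialmeadow} — a closed equation is provable from $\mathit{Md}$ iff it holds in the minimal subalgebra of $\prod_p\mathbb{G}_p$, and there $Z_n\cdot Z_m$ is plainly $0$ in every coordinate — but I would prefer the self-contained equational derivation above.)
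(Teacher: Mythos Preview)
Your argument is correct in all three parts. The preliminary identities $\underline{p_n}\cdot Z_n=0$ and $e_n^2=e_n$ are derived cleanly from $(\mathit{Ril})$; item (i) is then pure ring arithmetic; the derivation of (ii) via the general fact that idempotents are self-inverse is sound (the chain $y\cdot y^{-1}=y$, then $y^{-1}=y^{-1}\cdot y^{-1}\cdot y=y^{-1}\cdot y=y$, goes through exactly as you wrote); and the B\'ezout trick for (iii) is valid, including the bounded-subtraction check, since $a\,p_n=b\,p_m+1>b\,p_m$ guarantees that $\underline{a\,p_n}-\underline{b\,p_m}=\underline{1}$ is provable.

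As for comparison: the paper does not give its own proof of this lemma at all---it simply refers to Bergstra and Tucker (2007). Your write-up therefore supplies what the paper outsources. The semantic shortcut you mention in your final parenthetical (via Theorem~\ref{initialmeadow}) is indeed legitimate here, since that theorem is established in Section~2 and does not depend on the present lemma; but the equational derivation you give is preferable for exactly the reason you state, and it matches the spirit of the cited source.
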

\begin{proof}
Cf. Bergstra and Tucker (2007).
\end{proof}
\begin{lemma}\label{Zmod}
For all $n,m\in \Nat$,
\begin{enumerate}[(i)]
\item $\mathit{Md}\vdash Z_n\cdot \underline{m} = Z_n \cdot \underline{m\ mod\ p_n}$,\label{Zmod1}
\item $\mathit{Md}\vdash Z_n \cdot -\underline{m} = Z_n\cdot \underline{-m\mod\ p_n}$, and \label{Zmod2}
\item $\mathit{Md}\vdash Z_n \cdot \underline{m}^{-1} = Z_n \cdot \underline{l}$
where $l=0$ if $m=0$, or $0<l<p_n$ and $lm=1 \ mod\ p_n$ otherwise. \label{Zmod3}
\end{enumerate}
\end{lemma}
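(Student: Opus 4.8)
The plan is to reduce all three parts to the single auxiliary identity $\mathit{Md}\vdash Z_n\cdot\underline{p_n}=0$. This one is immediate: distributing, commuting, and applying $(\mathit{Ril})$,
\[
Z_n\cdot\underline{p_n}=\underline{p_n}-(\underline{p_n}\cdot\underline{p_n}^{-1})\cdot\underline{p_n}=\underline{p_n}-\underline{p_n}\cdot(\underline{p_n}\cdot\underline{p_n}^{-1})=\underline{p_n}-\underline{p_n}=0.
\]
For (\ref{Zmod1}) I would write $m=q\,p_n+r$ with $r=m\bmod p_n$; since $\underline{\ }$ respects $+$ and $\cdot$, $\underline{m}=\underline{q}\cdot\underline{p_n}+\underline{r}$, so distributing $Z_n$ and using the derivable law $x\cdot 0=0$ annihilates the first summand and leaves $Z_n\cdot\underline{m}=Z_n\cdot\underline{r}$.

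For (\ref{Zmod2}) I would push a sign through (\ref{Zmod1}): by the derivable identity $x\cdot -y=-(x\cdot y)$ and (\ref{Zmod1}), $Z_n\cdot-\underline{m}=-(Z_n\cdot\underline{r})$ where $r=m\bmod p_n$. If $r=0$ both sides equal $0=Z_n\cdot\underline{0}$; if $0<r<p_n$ then $-m\bmod p_n=p_n-r$, and since $\underline{\ }$ respects subtraction on $\{0,\dots,p_n\}$, $Z_n\cdot\underline{-m\bmod p_n}=Z_n\cdot\underline{p_n-r}=Z_n\cdot\underline{p_n}-Z_n\cdot\underline{r}=-(Z_n\cdot\underline{r})$, so both expressions agree.

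Part (\ref{Zmod3}) carries the real content. The first step is to commute $Z_n$ past the inverse: by the derivable law $(x\cdot y)^{-1}=x^{-1}\cdot y^{-1}$ together with Lemma \ref{Z}(\ref{Z2}), $Z_n\cdot\underline{m}^{-1}=(Z_n\cdot\underline{m})^{-1}$, which by (\ref{Zmod1}) equals $(Z_n\cdot\underline{r})^{-1}=Z_n\cdot\underline{r}^{-1}$ for $r=m\bmod p_n$. When $r=0$ — that is, when $p_n\mid m$ — this is $0^{-1}=0=Z_n\cdot\underline{0}$, so $l=0$ does the job. When $0<r<p_n$, primality of $p_n$ supplies $l$ with $0<l<p_n$ and $lr=1+kp_n$ for some $k\in\Nat$; then $\underline{l}\cdot\underline{r}=\underline{1+kp_n}=1+\underline{k}\cdot\underline{p_n}$, so multiplying by $Z_n$ and invoking $Z_n\cdot\underline{p_n}=0$ gives $Z_n\cdot\underline{l}\cdot\underline{r}=Z_n$. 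Multiplying this last identity by the idempotent $\underline{r}\cdot\underline{r}^{-1}$ and using $(\mathit{Ril})$ in the form $\underline{r}\cdot(\underline{r}\cdot\underline{r}^{-1})=\underline{r}$ yields $Z_n\cdot\underline{r}\cdot\underline{r}^{-1}=Z_n$; multiplying $Z_n\cdot\underline{l}\cdot\underline{r}=Z_n$ by $\underline{r}^{-1}$ then gives $Z_n\cdot\underline{l}=Z_n\cdot\underline{r}^{-1}=Z_n\cdot\underline{m}^{-1}$, as desired.

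The step I expect to be the main obstacle is exactly $Z_n\cdot\underline{r}\cdot\underline{r}^{-1}=Z_n$ for $0<r<p_n$: in an arbitrary meadow $\underline{r}\cdot\underline{r}^{-1}$ is merely a nonzero idempotent, not $1$, so $\underline{r}$ and $\underline{r}^{-1}$ cannot simply be cancelled. The device of running this idempotent through the identity $Z_n\cdot\underline{l}\cdot\underline{r}=Z_n$, where $(\mathit{Ril})$ makes it vanish, is precisely what licenses the cancellation, and once that is in place the rest is bookkeeping with the already-known facts that $\underline{\ }$ respects the ring operations and that $Z_n^{-1}=Z_n$.
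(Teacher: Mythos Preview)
Your proof is correct and follows essentially the same route as the paper: parts (\ref{Zmod1}) and (\ref{Zmod2}) are argued identically (the paper just expands $Z_n$ instead of first isolating the identity $Z_n\cdot\underline{p_n}=0$), and for part (\ref{Zmod3}) the paper invokes the Bergstra--Tucker lemma $u\cdot x\cdot y=u\Rightarrow u\cdot x\cdot x^{-1}=u$, which is exactly the implication you derive inline from $(\mathit{Ril})$ when you pass from $Z_n\cdot\underline{l}\cdot\underline{r}=Z_n$ to $Z_n\cdot\underline{r}\cdot\underline{r}^{-1}=Z_n$. The only cosmetic difference is that you first reduce $m$ to $r=m\bmod p_n$ via (\ref{Zmod1}) before finding the inverse, while the paper works directly with $m$; the algebra is the same.
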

\begin{proof} 
Suppose $m=kp_n+l$ with $0\leq l< p_n$. Then
\[
\begin{array}{rcll}
Z_n\cdot \underline{m}&=& Z_n \cdot \underline{kp_n+l}\\
&=& Z_n\cdot (\underline{kp_n} + \underline{l}) \\
&=&\underline{kp_n} - \underline{p_n}\cdot\underline{p_n}^{-1}\cdot \underline{kp_n} + Z_n\cdot \underline{m\ mod\ p_n}\\
&=&\underline{kp_n} - \underline{p_n}\cdot\underline{p_n}\cdot\underline{p_n}^{-1}\cdot \underline{k} + Z_n\cdot \underline{m\ mod\ p_n}\\
&=&\underline{kp_n} - \underline{kp_n} + Z_n\cdot \underline{m\ mod\ p_n}\\
&=&Z_n\cdot \underline{m\ mod\ p_n}
\end{array}
\]
This proves (1). For (2) observe that
\[
\begin{array}{rcll}
Z_n\cdot \underline{-m\mod\ p_n}&=& Z_n\cdot \underline{p_n -(m\ mod\ p_n)}\\
&=&Z_n \cdot \underline{p_n} - Z_n \cdot \underline{m\ mod\ p_n}\\
&=& -Z_n \cdot \underline{m}&\text{by (1)}\\
&=& Z_n \cdot -\underline{m} 
\end{array}
\]
In order to prove (3) we apply Lemma 2.3 of Bergstra and Tucker (2007), i.e.
\[
Md\vdash u\cdot x\cdot y = u \Rightarrow Md\vdash u\cdot x\cdot x^{-1}=u.
\]
Assume that $ml=1 \ mod \ p_n$. Then
\[
\begin{array}{rcll}
Z_n\cdot \underline{l} &=& Z_n\cdot \underline{1} \cdot\underline{l} \\
&=&Z_n\cdot \underline{lm}\cdot \underline{l}& \text{by the assumption and (1)}\\
&=&(Z_n\cdot \underline{l})\cdot (Z_n\cdot \underline{l}) \cdot(Z_n\cdot \underline{m})&\text{by \ref{Z}.\ref{Z1}.}
\end{array}
\]
Thus 
\[
\begin{array}{rcll}
Z_n\cdot \underline{l}&=&(Z_n\cdot \underline{l}) \cdot (Z_n\cdot \underline{m})\cdot 
(Z_n\cdot \underline{m})^{-1} &\text{ by the lemma}\\
&=& Z_n \cdot \underline{lm} \cdot (Z_n\cdot \underline{m})^{-1}\\
&=& Z_n \cdot \underline{m}^{-1}
\end{array}
\]
\end{proof}
\begin{proposition}\label{zprop}
For all $n\in \Nat$ and $t\in Ter_{\mathit{Md}}$,
\[
\mathit{Md}\vdash Z_n\cdot t = Z_n \cdot \phi_n(t).
\]
\end{proposition}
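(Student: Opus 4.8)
The plan is to prove this by structural induction on the closed term $t$, with Lemma~\ref{Z} and Lemma~\ref{Zmod} as the main tools, supplemented by the meadow identities collected in the introduction (notably $x\cdot 0=0$, $x\cdot -y=-(x\cdot y)$ and $(x\cdot y)^{-1}=x^{-1}\cdot y^{-1}$) and the remark that $\underline{\ }$ respects $+$ and $\cdot$. I will also use throughout that equational derivability is a congruence, so an inductive hypothesis about a subterm may be transported under any meadow operation.

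For the base cases I would check directly that $Z_n\cdot 0=0=Z_n\cdot\phi_n(0)$ and $Z_n\cdot 1=Z_n\cdot\underline{1}=Z_n\cdot\phi_n(1)$, the latter because $\mathit{Md}\vdash\underline{1}=1$. For the inductive step, fix $t$, let $s$ (and, where relevant, $s'$) be its immediate subterm(s), and write $\phi_n(s)=\underline{i}$, $\phi_n(s')=\underline{j}$ with $i,j<p_n$; the inductive hypotheses are $\mathit{Md}\vdash Z_n\cdot s=Z_n\cdot\underline{i}$ and $\mathit{Md}\vdash Z_n\cdot s'=Z_n\cdot\underline{j}$. The three ``ring'' cases are routine: for $t=-s$, push $-$ out and back in via $Z_n\cdot(-s)=-(Z_n\cdot\underline{i})=Z_n\cdot -\underline{i}$ and apply Lemma~\ref{Zmod}(ii); for $t=s+s'$, use distributivity to get $Z_n\cdot(s+s')=Z_n\cdot\underline{i}+Z_n\cdot\underline{j}=Z_n\cdot\underline{i+j}$ and apply Lemma~\ref{Zmod}(i); for $t=s\cdot s'$, use $Z_n=Z_n\cdot Z_n$ (Lemma~\ref{Z}(i)) together with commutativity and associativity to rearrange $Z_n\cdot(s\cdot s')$ into $(Z_n\cdot s)\cdot(Z_n\cdot s')=Z_n\cdot\underline{ij}$ and again apply Lemma~\ref{Zmod}(i). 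In each case the result of the reduction is exactly $Z_n\cdot\phi_n(t)$ by the definition of $\phi_n$.

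The one case I expect to need an extra idea is $t=s^{-1}$. The key observation is that $\mathit{Md}\vdash Z_n\cdot s^{-1}=(Z_n\cdot s)^{-1}$, which follows from $(x\cdot y)^{-1}=x^{-1}\cdot y^{-1}$ and $Z_n^{-1}=Z_n$ (Lemma~\ref{Z}(ii)). Transporting the inductive hypothesis under $\ ^{-1}$ and using these two facts once more gives $Z_n\cdot s^{-1}=(Z_n\cdot\underline{i})^{-1}=Z_n\cdot\underline{i}^{-1}$, and then Lemma~\ref{Zmod}(iii) rewrites $Z_n\cdot\underline{i}^{-1}$ as $Z_n\cdot\underline{l}$ with $l=0$ when $i=0$ and $l$ the unique residue $0<l<p_n$ with $li\equiv 1\pmod{p_n}$ otherwise. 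Since $i<p_n$ and inverses modulo the prime $p_n$ are unique, $\underline{l}$ coincides with $\phi_n(s^{-1})$ (the degenerate subcase $i=0$ giving $Z_n\cdot\underline{0}=0=Z_n\cdot\phi_n(s^{-1})$), which closes the induction. So the genuine obstacle is confined to handling inversion: this is where $\mathit{Ref}$, $\mathit{Ril}$ and their consequences---as already packaged in Lemmas~\ref{Z} and~\ref{Zmod}---carry the argument, while everything else is commutative-ring bookkeeping.
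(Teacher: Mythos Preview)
Your proposal is correct and is precisely the structural induction the paper has in mind; the paper's own proof consists of the single sentence ``This follows by structural induction from the previous lemma,'' and you have faithfully unpacked that induction, using Lemma~\ref{Z} (idempotence and self-inverse of $Z_n$) for the multiplicative and inverse cases and Lemma~\ref{Zmod} to reduce each resulting $Z_n\cdot\underline{m}$, $Z_n\cdot(-\underline{m})$, $Z_n\cdot\underline{m}^{-1}$ to $Z_n\cdot\phi_n(t)$.
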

\begin{proof} 
This follows by structural induction from the previous lemma.
\end{proof}

In addition to the terms $Z_n$, we can define terms $G_n$ such that for all $n$, $G_{n+1}$ equals $0$ in any Galois field with characteristic $p_{n}$ or less;
in any field of characteristic $0$, however,  and in particular, in the zero-totalized field of the rational numbers,
every $G_n$ equals 1. 
\begin{definition}
For $n\in \Nat$, define $G_n \in Ter_{\mathit{Md}}$ inductively as follows:
\begin{enumerate}[(i)]
\item $G_0=1$,
\item $G_{n+1}=G_n\cdot (1-Z_n)$.
\end{enumerate}
Observe that
\[
\mathit{Md}\vdash G_{n+1}= G_n\cdot \underline{p_n}\cdot \underline{p_n}^{-1}.
\]
\end{definition}
\begin{lemma}\label{G}
For all $n, m \in \Nat$ we have
\begin{enumerate}[(i)]
\item $\mathit{Md}\vdash G_n=1-Z_0 - \cdots - Z_{n-1}$,\label{G4}
\item $\mathit{Md}\vdash G_n\cdot Z_n = Z_n$,\label{G5}
\item $\mathit{Md} \vdash G_n =G_n^{-1}$,  \label{G1}
\item $n\leq m \Rightarrow \mathit{Md}\vdash G_m = G_m \cdot G_n$, and \label{G2}
\item if $0<k < p_n$, then $\mathit{Md}\vdash G_n\cdot \underline {k}\cdot\underline{k}^{-1}=G_n$. \label{G3}
\end{enumerate}
\end{lemma}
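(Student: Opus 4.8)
The plan is to reduce everything to two elementary facts. First, from the remark following the definition, $\mathit{Md}\vdash G_{k+1}=G_k\cdot\underline{p_k}\cdot\underline{p_k}^{-1}$, so by $x\cdot 1=x$ we have $\mathit{Md}\vdash G_n=\underline{p_0}\cdot\underline{p_0}^{-1}\cdots\underline{p_{n-1}}\cdot\underline{p_{n-1}}^{-1}$. Second, each factor $\underline{p_i}\cdot\underline{p_i}^{-1}$ is idempotent: by $(\mathit{Ril})$, $(\underline{p_i}\cdot\underline{p_i}^{-1})\cdot(\underline{p_i}\cdot\underline{p_i}^{-1})=(\underline{p_i}\cdot\underline{p_i}\cdot\underline{p_i}^{-1})\cdot\underline{p_i}^{-1}=\underline{p_i}\cdot\underline{p_i}^{-1}$. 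Since a product of pairwise commuting idempotents is again idempotent, $\mathit{Md}\vdash G_n\cdot G_n=G_n$; I would record this at the outset as it is used in (iv) and (v).

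For (i) I induct on $n$: the case $n=0$ is $G_0=1$, and for the step I expand $G_{n+1}=G_n\cdot(1-Z_n)=(1-Z_0-\cdots-Z_{n-1})\cdot(1-Z_n)$ by the induction hypothesis and observe, via Lemma \ref{Z}.\ref{Z3}, that $(Z_0+\cdots+Z_{n-1})\cdot Z_n=Z_0Z_n+\cdots+Z_{n-1}Z_n=0$, which leaves $1-Z_0-\cdots-Z_{n-1}-Z_n$. Part (ii) is then immediate from (i): $G_n\cdot Z_n=(1-Z_0-\cdots-Z_{n-1})\cdot Z_n=Z_n-Z_0Z_n-\cdots-Z_{n-1}Z_n=Z_n$, again by Lemma \ref{Z}.\ref{Z3}. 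For (iii) I apply the derivable law $(x\cdot y)^{-1}=x^{-1}\cdot y^{-1}$ and $(\mathit{Ref})$ to the product form of $G_n$: $G_n^{-1}=\prod_{i<n}(\underline{p_i}\cdot\underline{p_i}^{-1})^{-1}=\prod_{i<n}\underline{p_i}^{-1}\cdot\underline{p_i}$, and commuting the factors back yields $G_n$. For (iv) I telescope the recursion to obtain $\mathit{Md}\vdash G_m=G_n\cdot(1-Z_n)\cdots(1-Z_{m-1})$ for $n\le m$ (induction on $m$), whence $G_m\cdot G_n=G_n\cdot G_n\cdot(1-Z_n)\cdots(1-Z_{m-1})=G_m$ using that $G_n$ is idempotent.

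The real work is (v). First I would establish the special case $\mathit{Md}\vdash G_n\cdot\underline{p_i}\cdot\underline{p_i}^{-1}=G_n$ for $i<n$: since $G_{i+1}=G_i\cdot\underline{p_i}\cdot\underline{p_i}^{-1}$ and $i+1\le n$, part (iv) gives $G_n=G_n\cdot G_{i+1}=G_n\cdot G_i\cdot\underline{p_i}\cdot\underline{p_i}^{-1}$, and a second application of (iv) (with $i\le n$) removes the $G_i$. Now for $0<k<p_n$ write $k=p_{i_1}^{a_1}\cdots p_{i_r}^{a_r}$; every prime factor of $k$ is smaller than $p_n$, so each $i_j<n$. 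Using that $\underline{\ }$ respects multiplication, the derivable law $(x\cdot y)^{-1}=x^{-1}\cdot y^{-1}$, and the idempotency of $\underline{p_{i_j}}\cdot\underline{p_{i_j}}^{-1}$, one gets $\mathit{Md}\vdash\underline{k}\cdot\underline{k}^{-1}=\prod_{j=1}^{r}\underline{p_{i_j}}\cdot\underline{p_{i_j}}^{-1}$; multiplying by $G_n$ and absorbing the factors one at a time by the special case just proved gives $\mathit{Md}\vdash G_n\cdot\underline{k}\cdot\underline{k}^{-1}=G_n$. I expect the main obstacle to be making this last computation fully rigorous as a chain of $\mathit{Md}$-derivations — in particular the passage from $\underline{k}\cdot\underline{k}^{-1}$ to the product over the distinct prime divisors of $k$ — but this is routine once one notes that numerals respect the ring operations and that each $x\cdot x^{-1}$ is idempotent.
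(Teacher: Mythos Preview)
Your proof is correct and follows exactly the route the paper intends: the paper leaves the lemma as an exercise, giving only the hint for (v) that every prime factor of $k$ is a factor of $G_n$, which is precisely the observation you exploit. Your reductions---the product form $G_n=\prod_{i<n}\underline{p_i}\cdot\underline{p_i}^{-1}$, the idempotency of each factor and of $G_n$, the use of Lemma~\ref{Z}.\ref{Z3} for (i) and (ii), and the absorption argument for (v) via (iv)---are all sound and fill in the details the paper omits.
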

\begin{proof}
Exercise. For (5) observe that if $0<k<p_n$, then every prime 
factor of $k$ is a factor of $G_n$.
\end{proof}

Clearly, we do not have in general
\[
\mathit{Md}\vdash G_n \cdot t = G_n \cdot \phi(t).
\]
However, we can determine a lower bound in terms of $t$ such that this equation is provable in $\mathit{Md}$ for every $n$ exceeding this bound.
\begin{definition}
We define $\psi: Ter_{\mathit{Md}} \rightarrow \Nat$ inductively as follows.
\begin{enumerate}[(i)]
\item $\psi(0)=0=\psi(1)$,
\item $\psi(-t)=\psi(t)$,
\item $\psi(t^{-1})=\psi(t)$,
\item \[
\psi(t+t')=
\begin{cases}
max\{\psi(t),\psi(t')\} &\text{if $\phi(t)=0$ or $\phi(t')=0$},\\
i& \text{if $|\phi(t)|=\pm \frac{n}{m}$ and $|\phi(t')|=\pm \frac{k}{l}$}
\end{cases}
\]
where $i$ is the least natural number such that $p_i > m,l$,
\item $\psi(t\cdot t')=max\{\psi(t),\psi(t')\}$
\end{enumerate}
\end{definition}
\begin{proposition}\label{gprop}
For each $t\in Ter_{\mathit{Md}}$ and $\psi(t)\leq n\in\Nat$
\[
\mathit{Md}\vdash G_n\cdot t= G_n\cdot \phi(t).
\]
\end{proposition}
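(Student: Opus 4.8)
The plan is to prove the statement by structural induction on $t$, showing for each $t$ that $\mathit{Md}\vdash G_n\cdot t=G_n\cdot\phi(t)$ holds simultaneously for all $n$ with $\psi(t)\leq n$. The base cases are trivial: $G_n\cdot 0=0=G_n\cdot\phi(0)$, and $G_n\cdot 1=G_n=G_n\cdot(\underline 1\cdot\underline 1^{-1})=G_n\cdot\phi(1)$ since $\mathit{Md}\vdash\underline 1\cdot\underline 1^{-1}=1$. For the inductive steps I will lean on the derived meadow identities $0^{-1}=0$, $(x^{-1})^{-1}=x$, $(-x)^{-1}=-(x^{-1})$, $(x\cdot y)^{-1}=x^{-1}\cdot y^{-1}$, $x\cdot 0=0$, $x\cdot(-y)=-(x\cdot y)$ and $-(-x)=x$; on the fact that the numerals respect $+$, $\cdot$ and (where it makes sense) subtraction; on Proposition \ref{sounda}, which tells me $|\phi(t)|$ really is the value of $t$ in $\mathbb{Q}$; and on Lemma \ref{G}, especially $\mathit{Md}\vdash G_n^{-1}=G_n$, $\mathit{Md}\vdash G_n\cdot G_n=G_n$ (take $m=n$ in part (iv)) and $\mathit{Md}\vdash G_n\cdot\underline k\cdot\underline k^{-1}=G_n$ for $0<k<p_n$.

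Two of the inductive cases will be cheap, because the bound does not grow. For $t=-s$: since $\psi(-s)=\psi(s)\leq n$, the induction hypothesis gives $G_n\cdot(-s)=-(G_n\cdot s)=-(G_n\cdot\phi(s))=G_n\cdot(-\phi(s))$, and going through the three clauses defining $\phi(-s)$ shows $\mathit{Md}\vdash-\phi(s)=\phi(-s)$. For $t=s^{-1}$: again $\psi(s^{-1})=\psi(s)\leq n$, and from $G_n^{-1}=G_n$ and $(x\cdot y)^{-1}=x^{-1}\cdot y^{-1}$ one gets $G_n\cdot s^{-1}=(G_n\cdot s)^{-1}=(G_n\cdot\phi(s))^{-1}=G_n\cdot\phi(s)^{-1}$ by the induction hypothesis, while a clause-by-clause check gives $\mathit{Md}\vdash\phi(s)^{-1}=\phi(s^{-1})$. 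The key observation in the inversion case is that the renormalisation of $\phi(s)^{-1}$ is exact --- it is nothing but the swap of numerator and denominator of a normal rational term --- so the bound is not touched here, which is why $\psi(t^{-1})=\psi(t)$ causes no trouble.

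The real work will be in the cases $t=s+s'$ and $t=s\cdot s'$, and this is where $\psi$ and Lemma \ref{G}(v) come in. Take $t=s\cdot s'$ (addition will be similar). From $\psi(s\cdot s')\leq n$ I read off $\psi(s),\psi(s')\leq n$ together with $p_n$ exceeding the denominators of $\phi(s)$ and of $\phi(s')$. Idempotence of $G_n$ and the induction hypothesis give $G_n\cdot(s\cdot s')=(G_n\cdot s)\cdot(G_n\cdot s')=G_n\cdot\phi(s)\cdot\phi(s')$. Setting aside the case where $\phi(s)$ or $\phi(s')$ is $0$, write $\phi(s)=\pm\underline a\cdot\underline b^{-1}$ and $\phi(s')=\pm\underline c\cdot\underline d^{-1}$ in lowest terms; since numerals respect multiplication and $(\underline b\cdot\underline d)^{-1}=\underline b^{-1}\cdot\underline d^{-1}$, this product is provably $\pm\underline{ac}\cdot\underline{bd}^{-1}$, whereas $\phi(s\cdot s')$ is the same rational $ac/bd$ reduced, namely $\pm\underline{ac/g}\cdot\underline{bd/g}^{-1}$ with $g=\gcd(ac,bd)$. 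So I need only cancel the common factor $\underline g$; since $g\mid bd$, every prime divisor of $g$ is at most $\max\{b,d\}<p_n$, so repeated use of Lemma \ref{G}(v) yields $\mathit{Md}\vdash G_n\cdot\underline g\cdot\underline g^{-1}=G_n$, which is exactly the cancellation carrying $G_n\cdot\phi(s)\cdot\phi(s')$ to $G_n\cdot\phi(s\cdot s')$. In the additive case I would instead use $G_n\cdot\underline b\cdot\underline b^{-1}=G_n$ and $G_n\cdot\underline d\cdot\underline d^{-1}=G_n$ to put $\pm\underline a\cdot\underline b^{-1}\pm\underline c\cdot\underline d^{-1}$ over the common denominator $\underline{bd}$, collapse the numerator by numeral arithmetic, and cancel its gcd with $bd$ the same way.

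The hard point is exactly this cancellation. The term $\phi(s)\diamond\phi(s')$ handed over by the induction hypothesis is not itself a normal rational term, and in general it is not provably equal in $\mathit{Md}$ to $\phi(s\diamond s')$: for instance, with $s=\underline 7^{-1}$ and $s'=\underline 7$, the term $\phi(s)\cdot\phi(s')$ is provably equal to $\underline 7^{-1}\cdot\underline 7$, which is not provably equal to $1$ --- witness the zero-totalised field $\mathbb{G}_3$, where it takes the value $0$ --- yet $\phi(s\cdot s')=\underline 1\cdot\underline 1^{-1}$ is provably $1$. What saves the argument is that renormalising a normal rational term only ever cancels factors whose prime divisors all lie below $p_n$, and $\psi$ has been set up so that, once $\psi(t)\leq n$, every denominator entering the computation of $t$ is $<p_n$; multiplying through by $G_n$ then makes precisely those factors invertible, by Lemma \ref{G}(v). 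What remains --- threading the sign of a normal rational term through the clauses, and peeling off the subcases in which a sum or product of normal rational terms is $0$ --- is routine bookkeeping.
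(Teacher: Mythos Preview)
Your overall strategy matches the paper's: structural induction on $t$, with the base cases and the unary cases ($-t$ and $t^{-1}$) handled via the standard meadow identities and $G_n^{-1}=G_n$, and the binary cases handled via idempotence of $G_n$ together with the cancellation lemma \ref{G}.\ref{G3}. The paper reduces first to $n=\psi(t)$ via \ref{G}.\ref{G2} rather than carrying all $n\geq\psi(t)$ through the induction, but that difference is cosmetic.

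There is, however, a genuine gap in your multiplication case. You assert that from $\psi(s\cdot s')\leq n$ one can ``read off \ldots\ $p_n$ exceeding the denominators of $\phi(s)$ and of $\phi(s')$'', and later that ``$\psi$ has been set up so that, once $\psi(t)\leq n$, every denominator entering the computation of $t$ is $<p_n$''. Neither claim follows from the definition as given: $\psi(s\cdot s')=\max\{\psi(s),\psi(s')\}$ carries no denominator information. Your own example exhibits the failure: for $s=\underline{7}$ and $s'=\underline{7}^{-1}$ one computes $\psi(s)=\psi(s')=0$, hence $\psi(s\cdot s')=0$, yet the denominator of $\phi(s')=\underline{1}\cdot\underline{7}^{-1}$ is $7\not<p_0=2$. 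Your cancellation step would then need $\mathit{Md}\vdash G_0\cdot\underline{7}\cdot\underline{7}^{-1}=G_0$, i.e.\ $\underline{7}\cdot\underline{7}^{-1}=1$, which is refuted in $\mathbb{G}_7$. In fact this refutes the proposition itself at $n=0$ for $t=\underline{7}\cdot\underline{7}^{-1}$, since $G_0\cdot t$ and $G_0\cdot\phi(t)=G_0\cdot\underline{1}\cdot\underline{1}^{-1}$ disagree in $\mathbb{G}_7$.

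In fairness, the paper's proof skates over exactly this point: it disposes of the multiplication case with the remark that it ``follows from Lemma~\ref{G}.\ref{G1}--\ref{G2}'', and the second clause in the definition of $\psi(t+t')$ has the analogous defect (it drops $\max\{\psi(t),\psi(t')\}$, so $\psi(r)\leq\psi(r+s)$ need not hold and the inductive hypothesis cannot be invoked as written). The evident repair is to redefine $\psi(t\cdot t')$, and the second clause of $\psi(t+t')$, as the maximum of $\psi(t)$, $\psi(t')$, and the least $i$ with $p_i$ exceeding both denominators; with that amended $\psi$ your argument goes through verbatim.
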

\begin{proof} 
It suffices to prove 
\[
\mathit{Md}\vdash G_{\psi(t)}\cdot t= G_{\psi(t)}\cdot \phi(t)
\]
by Lemma \ref{G}.\ref{G2}. We employ structural induction. The base cases are trivial. In the induction step the cases for inversion and 
multiplication follow from Lemma \ref{G}.\ref{G1} - \ref{G2}, and the case for $-t$ from the fact that $\mathit{Md}\vdash \phi(-t) = -\phi(t)$ and
$\psi(-t)=\psi(t)$. For addition, let $t=r+s$ and assume that 
\[
\mathit{Md}\vdash G_{\psi(r)}\cdot r=G_{\psi(r)}\cdot \phi(r)\text{ and }
\mathit{Md}\vdash G_{\psi(s)}\cdot s=G_{\psi(s)}\cdot \phi(s)
\]
Now we distinguish 2 cases.
\begin{enumerate}[(i)]
\item $\phi(r)=0$ or $\phi(s)=0$: Then $\mathit{Md}\vdash \phi(r+s)=\phi(r) + \phi(s)$ and 
$\psi(r+s)=max\{\psi(r), \psi(s)\}$. Thus
\[
\begin{array}{rcll}
\mathit{Md}\vdash G_{\psi(r+s)}\cdot(r+s) & = & G_{\psi(r+s)}\cdot r + G_{\psi(r+s)}\cdot s\\
&=& G_{\psi(r+s)}\cdot G_{\psi(r)}\cdot r + G_{\psi(r+s)}\cdot G_{\psi(s)}\cdot s& \text{ by \ref{G}.\ref{G2}}\\
&=& G_{\psi(r+s)}\cdot G_{\psi(r)}\cdot \phi(r) + G_{\psi(r+s)}\cdot G_{\psi(s)}\cdot \phi(s)\\
&=& G_{\psi(r+s)}\cdot \phi(r) + G_{\psi(r+s)}\cdot \phi(s)\\
&=& G_{\psi(r+s)}\cdot (\phi(r) +\phi(s))\\
&=& G_{\psi(r+s)}\cdot \phi(r + s)
\end{array}
\]
\item $|\phi(r)|=\pm \frac{n}{m}$ and $|\phi(s)|=\pm \frac{k}{l}$: We consider the case that
$\phi(r)=\underline{n}\cdot \underline{m}^{-1}$ and $\phi(s)=\underline{k}\cdot \underline{l}^{-1}$. 
First observe that since $p_{\psi(r+s)}>m,l$, $p_{\psi(r+s)}$ exceeds every common prime factor of 
$ml$ and $nl+km$.
 Thus 
\[
\begin{array}{rcll}
\mathit{Md}\vdash G_{\psi(r+s)}\cdot(r+s) & = & G_{\psi(r+s)}\cdot r + G_{\psi(r+s)}\cdot s\\
&=&G_{\psi(r+s)}\cdot \underline{n}\cdot \underline{m}^{-1} + G_{\psi(r+s)}\cdot \underline{k}\cdot \underline{l}^{-1}\\
&=&G_{\psi(r+s)}\cdot \underline{l}\cdot \underline{l}^{-1}\cdot \underline{n}\cdot \underline{m}^{-1} +
G_{\psi(r+s)}\cdot \underline{m}\cdot \underline{m}^{-1}\cdot \underline{k}\cdot \underline{l}^{-1}& \text{ by \ref{G}.\ref{G3}}\\
&=&G_{\psi(r+s)}\cdot (\underline{l}\cdot \underline{n} +
\underline{m}\cdot \underline{k})\cdot (\underline{m}\cdot \underline{l})^{-1}\\
&=&G_{\psi(r+s)}\cdot \underline{l n +
mk}\cdot \underline{ml}^{-1}\\
&=&G_{\psi(r+s)}\cdot \phi(r + s)
\end{array}
\]
\end{enumerate}
by repeated removal of shared prime factors using again \ref{G}.\ref{G3}.
The remaining  
3 cases follow by a similar argument taking in addition the sign into account. 
\end{proof}

We are now able to determine for every closed meadow term the normal form mentioned in the beginning of this section.
\begin{proposition}\label{normal}
For each $t\in Ter_{\mathit{Md}}$,
\[
\mathit{Md}\vdash t=\Sigma_{i=0}^{\psi(t)-1}Z_i\cdot \phi_{i}(t) + G_{\psi(t)}\cdot \phi(t).
\]
\end{proposition}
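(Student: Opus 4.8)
The plan is to obtain the identity by decomposing the multiplicative unit and then invoking Propositions~\ref{zprop} and~\ref{gprop}. Concretely, I would first record that, by Lemma~\ref{G}.\ref{G4}, $\mathit{Md}\vdash G_{\psi(t)}=1-Z_0-\cdots-Z_{\psi(t)-1}$, and that the commutative-ring axioms in $\mathit{CR}$ let us rearrange this into
\[
\mathit{Md}\vdash 1 = G_{\psi(t)} + \Sigma_{i=0}^{\psi(t)-1}Z_i .
\]
When $\psi(t)=0$ the sum is empty and this is just $\mathit{Md}\vdash 1=G_0$.

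Next I would multiply both sides by $t$ and use $x\cdot 1 = x$ together with distributivity --- applied $\psi(t)$ times, i.e.\ by a trivial side induction on $\psi(t)$ --- to get
\[
\mathit{Md}\vdash t = G_{\psi(t)}\cdot t + \Sigma_{i=0}^{\psi(t)-1}Z_i\cdot t .
\]

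Finally I would rewrite each summand separately. For every $i<\psi(t)$, Proposition~\ref{zprop} gives $\mathit{Md}\vdash Z_i\cdot t = Z_i\cdot \phi_i(t)$; and since $\psi(t)\leq\psi(t)$, Proposition~\ref{gprop} gives $\mathit{Md}\vdash G_{\psi(t)}\cdot t = G_{\psi(t)}\cdot \phi(t)$. Substituting these provably equal subterms into the right-hand side of the previous display yields exactly
\[
\mathit{Md}\vdash t = \Sigma_{i=0}^{\psi(t)-1}Z_i\cdot \phi_i(t) + G_{\psi(t)}\cdot \phi(t),
\]
which is the assertion. All of the real content has already been placed in Propositions~\ref{zprop} and~\ref{gprop} (and in Lemmas~\ref{Z}--\ref{G}), so there is no genuine obstacle in this step; the only point demanding a little care is the legitimacy of the finite-sum manipulations --- rearranging $1$ as $G_{\psi(t)}+\Sigma_{i=0}^{\psi(t)-1}Z_i$ and distributing $t$ across that sum --- which is a routine induction on $\psi(t)$ inside $\mathit{CR}$.
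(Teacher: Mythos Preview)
Your argument is correct. The only difference from the paper's proof is organizational: the paper reaches the intermediate identity $t=\Sigma_{i=0}^{\psi(t)-1}Z_i\cdot t + G_{\psi(t)}\cdot t$ by an iterative expansion---proving the one-step identity $G_n\cdot t = Z_n\cdot\phi_n(t) + G_{n+1}\cdot t$ (via Proposition~\ref{zprop} and Lemma~\ref{G}.\ref{G5}) and applying it $\psi(t)$ times starting from $t=G_0\cdot t$---whereas you obtain the same decomposition in one stroke by invoking Lemma~\ref{G}.\ref{G4} directly to write $1=G_{\psi(t)}+\Sigma_{i=0}^{\psi(t)-1}Z_i$. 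Your route is marginally shorter since it cashes in a lemma already stated, while the paper's iterative version makes the ``peeling off one prime at a time'' structure more visible; in substance the two are the same, both resting on Propositions~\ref{zprop} and~\ref{gprop}.
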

\begin{proof} 
First observe that ($*$)
\[
\begin{array}{rcll}
G_n\cdot t &=& (Z_n + (1- Z_n))\cdot G_n \cdot t \\
&=& Z_n \cdot G_n \cdot t + (1-Z_n)\cdot G_n \cdot t\\
&=& Z_n \cdot G_n \cdot t + G_{n+1} \cdot t\\
&=& G_n \cdot Z_n \cdot \phi_n(t) + G_{n+1} \cdot t& \text{ by Proposition \ref{zprop}}\\
&=& Z_n \cdot \phi_n(t) + G_{n+1} \cdot t & \text{ by Lemma \ref{G}.\ref{G5}.}
\end{array}
\]
We therefore can expand $t$ as follows:
\[
\begin{array}{lcll}
t &=& G_0 \cdot t\\
&=&Z_0 \cdot \phi_0(t) + G_{1} \cdot t\\
&\vdots&\\
&=&Z_0 \cdot \phi_0(t) + \cdots +  Z_{\psi(t) -1} \cdot \phi_{\psi(t) - 1 }(t) + G_{\psi(t)} \cdot t& \text{ by repeated use of ($*$)}\\
&=&Z_0 \cdot \phi_0(t) + \cdots +  Z_{\psi(t) -1} \cdot \phi_{\psi(t) - 1 }(t)+ G_{\psi(t)}\cdot \phi(t)& \text{ by the previous proposition.}
\end{array}
\]
\end{proof}
\begin{proposition}
For each $t\in Ter_{\mathit{Md}}$ and $\psi(t)\leq n\in \Nat$,
\[
\mathit{Md}\vdash t=\Sigma_{i=0}^{n-1}Z_i\cdot \phi_{i}(t) + G_{n}\cdot \phi(t).
\]
\end{proposition}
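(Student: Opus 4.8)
The plan is to recognise that the final Proposition is just Proposition \ref{normal} with the exact threshold $\psi(t)$ relaxed to an arbitrary $n\ge\psi(t)$, and that nothing in the derivation of Proposition \ref{normal} forces us to stop the expansion at level $\psi(t)$. The telescoping step ($*$) used there, namely $\mathit{Md}\vdash G_m\cdot t = Z_m\cdot\phi_m(t) + G_{m+1}\cdot t$, is valid for \emph{every} $m$; the only place where the bound $\psi(t)$ enters is the final replacement of $G_{\psi(t)}\cdot t$ by $G_{\psi(t)}\cdot\phi(t)$, and by Proposition \ref{gprop} that replacement is legitimate at every level $\ge\psi(t)$.

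So first I would fix $n\ge\psi(t)$ and, exactly as in the proof of Proposition \ref{normal}, expand
\[
t = G_0\cdot t = Z_0\cdot\phi_0(t) + G_1\cdot t = \cdots = \Sigma_{i=0}^{n-1} Z_i\cdot\phi_i(t) + G_n\cdot t
\]
by $n$-fold application of ($*$). Then I would invoke Proposition \ref{gprop} --- which applies because $n\ge\psi(t)$ --- to rewrite the tail $G_n\cdot t$ as $G_n\cdot\phi(t)$, obtaining the claimed identity. Equivalently one can argue by induction on $n\ge\psi(t)$: the base case $n=\psi(t)$ is Proposition \ref{normal}, and for the step one uses $\mathit{Md}\vdash G_n\cdot\phi(t)=G_n\cdot t$ (Proposition \ref{gprop}, since $n\ge\psi(t)$), then ($*$) to split off $Z_n\cdot\phi_n(t)$, then $\mathit{Md}\vdash G_{n+1}\cdot t=G_{n+1}\cdot\phi(t)$ (Proposition \ref{gprop} again, since $n+1\ge\psi(t)$), and finally substitutes back into the induction hypothesis.

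I do not expect any genuine obstacle; the statement is a routine strengthening. The one bookkeeping point to verify is that Proposition \ref{gprop} is available at the level $n$ rather than only at $\psi(t)$, but this is immediate --- Proposition \ref{gprop} is stated precisely for all $n\ge\psi(t)$ (its proof reduces the general case to $n=\psi(t)$ via Lemma \ref{G}.\ref{G2}). In particular the new summands $Z_{\psi(t)}\cdot\phi_{\psi(t)}(t),\dots,Z_{n-1}\cdot\phi_{n-1}(t)$ that appear when $n>\psi(t)$ are produced by the same identity ($*$) that produced the earlier ones, so no new computation is needed.
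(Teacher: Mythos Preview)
Your proposal is correct and matches the paper's own argument: the paper's proof is the one-line remark ``By expanding $t$ as far as necessary and Proposition \ref{gprop},'' which is exactly your first approach of running the telescoping identity ($*$) up to level $n$ and then invoking Proposition \ref{gprop} for the tail $G_n\cdot t$. Your alternative inductive phrasing is also fine but not needed.
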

\begin{proof} 
By expanding $t$ as far as necessary and Proposition \ref{gprop} .
\end{proof}
\begin{theorem}
For all $s,t\in Ter_{\mathit{Md}}$,
\[
\mathit{Md}\vdash s=t  \Leftrightarrow \text{ for all } i\leq max {\{\psi(s),\psi(t)\}-1}\ 
\ \phi_{i}(s)=\phi_{i}(t)\ \&\ \phi(s)=\phi(t)
\]
\end{theorem}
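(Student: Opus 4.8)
The plan is to read the equivalence off directly from the two soundness propositions together with the uniform normal form, with essentially no new computation.

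For the direction from left to right I would simply invoke soundness: assuming $\mathit{Md}\vdash s=t$, Proposition \ref{sounda} yields $\phi(s)=\phi(t)$, and Proposition \ref{soundb}, applied once for each index, yields $\phi_i(s)=\phi_i(t)$ for every $i\in\Nat$, hence in particular for all $i\le\max\{\psi(s),\psi(t)\}-1$. So the right-hand condition holds.

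For the converse I would put $n=\max\{\psi(s),\psi(t)\}$, so that $\psi(s)\le n$ and $\psi(t)\le n$, and then apply the proposition immediately preceding this theorem (the normal form relative to an arbitrary bound $n\ge\psi(\cdot)$) to both $s$ and $t$:
\[
\mathit{Md}\vdash s=\Sigma_{i=0}^{n-1}Z_i\cdot\phi_i(s)+G_n\cdot\phi(s),
\qquad
\mathit{Md}\vdash t=\Sigma_{i=0}^{n-1}Z_i\cdot\phi_i(t)+G_n\cdot\phi(t).
\]
Under the right-hand hypothesis, $\phi_i(s)=\phi_i(t)$ for all $i\le n-1$ and $\phi(s)=\phi(t)$; since $\phi_i$ and $\phi$ deliver genuine closed terms, the two displayed right-hand sides are literally the same term, whence $\mathit{Md}\vdash s=t$ by symmetry and transitivity of $=$.

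The proof will be short because all the work has been front-loaded into Propositions \ref{sounda}, \ref{soundb}, \ref{normal} and its sharpening; I do not expect a genuine obstacle. The only points I would be careful about are the bookkeeping ones: choosing the single bound $n=\max\{\psi(s),\psi(t)\}$ so that the generalized normal form applies to $s$ and to $t$ simultaneously, checking that the hypothesis is quantified over exactly the index set $\{0,\dots,n-1\}$ occurring in that normal form so that no range mismatch arises, and reading ``$\phi_i(s)=\phi_i(t)$'' as syntactic identity of numerals (as in the remark following the definition of $\phi$ and in Proposition \ref{sounda}), which is precisely what lets one conclude that the two normal forms coincide as terms rather than merely provably.
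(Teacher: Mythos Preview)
Your proposal is correct and follows essentially the same approach as the paper's own proof: left to right via Propositions~\ref{sounda} and~\ref{soundb}, and right to left by applying the preceding proposition with the common bound $n=\max\{\psi(s),\psi(t)\}$ so that the two normal forms become identical terms. Your write-up is in fact more explicit than the paper's two-line proof, and your bookkeeping remarks (choice of $n$, index range, syntactic identity of $\phi_i$- and $\phi$-values) are all on point.
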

\begin{proof} 
Left to right follows from Propositions \ref{sounda} and \ref{soundb}. 
For the reverse direction apply the previous proposition. 
\end{proof}
\begin{corollary}
The closed word problem for meadows is decidable. 
\end{corollary}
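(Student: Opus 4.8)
The plan is to read a decision procedure off the preceding theorem, whose right-hand side is a finite conjunction of equalities between effectively computable objects, so that essentially nothing remains to prove beyond checking effectiveness. First I would observe that the maps $\psi: Ter_{\mathit{Md}}\to\Nat$, $\phi: Ter_{\mathit{Md}}\to\mathbb{Q}_{\mathit{Md}}$ and each evaluation $\phi_n: Ter_{\mathit{Md}}\to\mathbb{G}_{n,\mathit{Md}}$ are computable: each is defined by a structural recursion on the finite term $t$, the base clauses are given explicitly, and the recursion steps invoke only operations that are themselves computable — integer addition and multiplication, greatest common divisors (needed to put a rational in lowest terms), reduction modulo a prime $p_n$, computation of a multiplicative inverse in $\mathbb{G}_n$, and, in the clause for $\psi(t+t')$, the search for the least index $i$ with $p_i$ exceeding a given bound, which terminates because the primes can be enumerated in increasing order as $(p_n)_{n\in\Nat}$.

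Given this, the procedure is immediate. On input $s,t\in Ter_{\mathit{Md}}$, compute $N:=\max\{\psi(s),\psi(t)\}$, then the finitely many numerals $\phi_0(s),\dots,\phi_{N-1}(s)$ and $\phi_0(t),\dots,\phi_{N-1}(t)$, and finally the normal rational terms $\phi(s),\phi(t)$. Equality of the results is decidable by inspection: two numerals $\underline{i},\underline{j}$ representing elements of the same $\mathbb{G}_n$ coincide iff $i=j$, and two members of $\mathbb{Q}_{\mathit{Md}}$ coincide iff they are syntactically identical (both $0$, or both with the same sign, numerator and denominator). Accept if $\phi_i(s)=\phi_i(t)$ for all $i\le N-1$ and $\phi(s)=\phi(t)$, and reject otherwise; by the preceding theorem this answer is always correct, so the closed word problem is decidable.

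The hard part, such as it is, is only the bookkeeping behind the effectiveness claims — in particular confirming that $\psi$ is a total computable function, the single non-obvious point being the well-definedness of the least-prime search in its additive clause. I would also record the semantic counterpart of the criterion: by Theorem~\ref{initialmeadow} the initial meadow is the minimal subalgebra of $\prod_{p}\mathbb{G}_p$, and combined with the soundness Propositions~\ref{sounda} and~\ref{soundb} this shows that closed terms $s,t$ denote the same element of the initial meadow exactly when $\phi(s)=\phi(t)$ and $\phi_i(s)=\phi_i(t)$ for all $i\le N-1$ — the very condition tested by the algorithm above.
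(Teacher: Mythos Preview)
Your proposal is correct and follows exactly the route the paper intends: the corollary is stated without proof, as an immediate consequence of the preceding theorem, and you have simply made explicit the obvious decision procedure together with the effectiveness of $\psi$, $\phi$ and the $\phi_n$. The extra semantic remark about the initial meadow is accurate but not needed for the argument.
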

In Bergstra and Tucker (2007) it is proved that the closed equational theories of 
zero-totalized fields and of meadows coincide. Thus decidability of the closed word
problem for meadows carries over to zero-totalized fields.
\begin{corollary}
The closed word problem for zero-totalized fields is decidable.
\end{corollary}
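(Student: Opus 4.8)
The plan is to obtain this immediately from the previous corollary, using the fact that zero-totalized fields and meadows have the same closed equational theory. First I would pin down what is being asked: the closed word problem for zero-totalized fields is the problem of deciding, for $s,t\in Ter_{\mathit{Md}}$, whether $s=t$ holds in every zero-totalized field.

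Next I would invoke the theorem of Bergstra and Tucker (2007) to the effect that, for closed terms $s$ and $t$, the equation $s=t$ is valid in all zero-totalized fields precisely when it is valid in all meadows; one inclusion of these closed theories is trivial, since every zero-totalized field is a meadow, and the reverse inclusion is their result. By completeness of equational logic, validity of $s=t$ in all meadows is the same as $\mathit{Md}\vdash s=t$. Chaining these equivalences yields: $s=t$ holds in every zero-totalized field if and only if $\mathit{Md}\vdash s=t$.

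It then remains only to transport decidability along this equivalence. The previous corollary says that $\{(s,t)\mid \mathit{Md}\vdash s=t\}$ is decidable; in fact the preceding theorem exhibits the decision procedure, namely computing $\phi_i(s),\phi_i(t)$ for $i<\max\{\psi(s),\psi(t)\}$ and $\phi(s),\phi(t)$, and testing the resulting finite list of syntactic identities. Since $\psi$, the $\phi_i$ and $\phi$ are all effective, the same algorithm decides $s=t$ in all zero-totalized fields.

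I do not expect a real obstacle here: the substance lies entirely in the cited coincidence theorem and in Proposition~\ref{normal}, both already available. The one point deserving a word of caution is the reading of ``closed word problem for zero-totalized fields'' as validity across the whole class — which is exactly what makes the Bergstra--Tucker equivalence applicable verbatim. If one instead wished to decide truth in a single distinguished zero-totalized field, one would need the extra input that the initial meadow already tells the rationals apart from the finite prime fields, which is furnished by the structural analysis of the earlier sections.
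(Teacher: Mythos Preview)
Your proposal is correct and follows essentially the same route as the paper: invoke the Bergstra--Tucker result that the closed equational theories of meadows and zero-totalized fields coincide, and then transfer the decidability of the closed word problem for meadows (the preceding corollary) directly. The paper's own argument is just this one-line reduction; your additional remarks on completeness and on the explicit decision procedure are accurate elaborations but not needed for the proof.
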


\section{Conclusion}
We have represented the initial meadows as follows:
\begin{enumerate}[(i)]
\item the initial meadow of characteristic $0$ is the minimal submeadow of the direct product of all
finite prime fields|it is a proper submeadow and not a product of fields|and
\item the initial meadow of characteristic $k>0$ is $\prod_{p \text{ with }p|k}\mathbb{G}_p$.
\end{enumerate}
This gives a clear picture of
the finite and infinite initial objects in the categories of meadows.

The finite initial meadows are decidable and so is the infinite one. The open word problem, however, 
remains open. In particular, it is not known whether there exists a finite Knuth-Bendix completion of the specification of  meadows.

\section*{Acknowledgement} 
This work was partly
 supported by The Netherlands Organisation for Scientific Research (NWO) under grant
 638.003.611.

\end{document}